\newtheorem{theorem}{Theorem}[section]
\newtheorem{lemma}[theorem]{Lemma}
\newtheorem{corollary}[theorem]{Corollary}
\newtheorem{proposition}[theorem]{Proposition}
\newtheorem{conjecture}[theorem]{Conjecture}
\theoremstyle{definition}
\newtheorem{definition}[theorem]{Definition}
\theoremstyle{remark}
\newtheorem{remark}[theorem]{Remark}
\numberwithin{equation}{section}
\DeclareMathOperator*{\argmax}{argmax}
\DeclareMathOperator*{\linf}{liminf}
\newcommand{\nc}{\newcommand}
\nc{\nt}{\newtheorem} 
\nc{\ip}[2]{\mbox{$\langle #1,#2 \rangle$}} 
\nc{\pf}{\noindent{\bf Proof\ \ }}
\nc{\finpf}{\hfill{$\Box$}\linespace}
\nc{\linespace}{\vspace{\baselineskip} \noindent} 
\nc{\R}{{\bf R}}
\nc{\cl}{\mbox{\rm cl}\,} 
\nc{\rb}{\mbox{\rm rb}\,}
\nc{\ri}{\mbox{\rm ri}\,}
\nc{\epi}{\mbox{\rm epi}\,}
\nc{\dom}{\mbox{\rm dom}\,}
\def\tto{\;{\lower 1pt \hbox{$\rightarrow$}}\kern -12pt
           \hbox{\raise 2.8pt \hbox{$\rightarrow$}}\;}
\begin{document}



\author{D. Drusvyatskiy\thanks{%
    School of Operations Research and Information Engineering,
    Cornell University,
    Ithaca, New York, USA;
    {\tt dd379@cornell.edu}.
    Work of Dmitriy Drusvyatskiy on this paper has been partially supported by the NDSEG grant from the Department of Defense.}
  \and
  A.S. Lewis\thanks{%
  School of Operations Research and Information Engineering,
  Cornell University,
  Ithaca, New York, USA;
  {\tt http://people.orie.cornell.edu/{\raise.17ex\hbox{$\scriptstyle\sim$}}aslewis/}.
  Work of A. S. Lewis on this paper has been supported in part by National Science Foundation Grant DMS-0806057.
}}

\title{Generic nondegeneracy in convex optimization}


\date{May 6, 2010.}


\maketitle

\begin{abstract} 
We show that minimizers of convex functions subject to almost all linear perturbations are nondegenerate. An analogous result holds more generally, for {\em lower-$\bf{C}^2$} functions.    
\end{abstract}

\section{Introduction}
In this work we study the nature of minimizers of ``typical'' convex functions. 
 We model this question by considering a fixed extended-real-valued convex function
$f$, and then studying properties of minimizers of the perturbed function $x \mapsto f_v(x)=f(x)-v^{T}x$ that hold for {\em almost all} values of the data vector $v \in \R^n$ (in the sense of Lebesgue measure).

Classical theory shows that, given a proper convex function $f$, the perturbed function $f_v$ typically has at most one minimizer.  To see this, note first that we may assume $f$ is closed, since any minimizer of $f$ also minimizes its closure.  Now we observe that the Fenchel conjugate $f^*$ is differentiable almost everywhere on the interior of its domain, by Rademacher's theorem (see for example \cite[Theorem 9.60]{VA}), so for almost all vectors $v$, the subdifferential $\partial f^*(v)$ is either single-valued or empty.  The result now follows, since this subdifferential coincides with the set $(\partial f)^{-1}(v)$, which is exactly the set of minimizers of $f_v$.

Our aim is to strengthen this classical result.  Minimizers $x$ of the perturbed function $f_v$ are characterized by the property that the vector zero lies in the subdifferential $\partial f_v(x)$.  We prove, for almost all vectors $v$, that the minimizer $x$ is not only unique, but also {\em nondegenerate\/}, by which we mean that zero lies in the relative interior of the subdifferential:  $0 \in \ri \partial f_v(x)$ (or equivalently, the positive span $\R_+ \partial f_v(x)$ is a subspace).  The proof, following an idea of \cite{pataki-tuncel}, uses a result in geometric measure theory due to Larman \cite{Larman}.

As an example, consider the standard linear programming problem 
\[
\max_{x\in\R^n} \Big\{v^{T}x: a_i^{T}x\leq b_i ~(i=1,2,\ldots,m)\Big\},
\]
for given vectors $a_i \in \R^n$ and scalars $b_i \in \R$.
We can restate this problem as minimizing the perturbed function $f_v$ corresponding to the original function $f$ that takes the value zero on the feasible region and $+\infty$ elsewhere. Consider an optimal solution 
$\bar{x}$ and the corresponding index set of active constraints, $I=\{i:a_i^{T}\bar{x}=b_i\}$. Then we have 
\begin{align*}
\partial f_v(\bar{x})&= -v+\{\displaystyle\sum\limits_{i\in I} \lambda_ia_i: \lambda_i\geq 0\},\\
\ri \partial f_v(\bar{x})&= -v+\{\displaystyle\sum\limits_{i\in I} \lambda_ia_i: \lambda_i > 0\}.
\end{align*}
Thus the minimizer $\bar{x}$ of $f_v$ is nondegenerate exactly when there exists a dual-feasible solution 
$\lambda\in\R^m$ satisfying strict complementary slackness.  We hence recover the well-known fact that, for almost all objective functions, if a linear program has an optimal solution, then that solution is unique and furthermore corresponds to a strictly-complementary-slack dual solution.

For convex functions, critical points (those at which zero is a subgradient) coincide with minimizers.  For nonconvex functions, we can more generally consider nondegeneracy of critical points.  It transpires that our result on typical nondegeneracy extends in particular to all {\em lower-$\bf{C}^2$} functions (those functions locally representable as sums of convex functions and quadratics).  However, in more general contexts the result may fail.  The classical generalization of the subdifferential of a convex function is the Clarke generalized gradient \cite{Clarke}, but \cite{counter} presents a locally Lipschitz function $f\colon\R\rightarrow\R$, whose Clarke generalized gradient $\partial_c f$ at any point $x\in\R$ is the interval 
$[-x,x]$.  In this case, the perturbed function $f_v$ has a degenerate critical point for {\em every} non-zero value of $v$.

\section{Preliminaries}\label{sec:pre}
\subsection{Variational Analysis}
We recall some standard notions from variational analysis (see for example \cite{VA}).
Consider the extended real line $\overline{\R}:=\R\cup\{-\infty\}\cup\{+\infty\}$. We say that an extended-real-valued function is {\em proper} if it is never $\{-\infty\}$ and is not always $\{+\infty\}$.  

For a function $f\colon\R^n\rightarrow\overline{\R}$, we define the {\em domain} of $f$ to be $$\mbox{\rm dom}\, f=\{x\in\R^n: f(x)<+\infty\},$$ and we define the {\em epigraph} of $f$ to be $$\mbox{\rm epi}\, f= \{(x,r)\in\R^n\times\R: r\geq f(x)\}.$$  A function is {\em convex} when its epigraph is convex, and {\em closed} when its epigraph is closed. 

\begin{definition}
{\rm Consider a set $S\subset\R^n$ and a point $\bar{x}\in S$. The {\em regular normal cone} to $S$ at $\bar x$, denoted 
$\hat N_S(\bar x)$, consists of all vectors $v \in \R^n$ such that $$\langle v,x-\bar{x} \rangle \leq o(|x-\bar{x}|) \textrm{ for }x\in S,$$ 
where we denote by $o(|x-\bar{x}|) \textrm{ for }x\in S$ a term with the property that $$\frac{o(|x-\bar{x}|)}{|x-\bar{x}|}\rightarrow 0$$ when $x\stackrel{S}{\rightarrow} \bar{x}$ with $x\neq\bar{x}$.
} 
\end{definition}

\begin{definition}
{\rm Consider a set $S\subset\R^n$ and a point $\bar{x}\in S$.  The {\em limiting normal cone} to $S$ at $\bar{x}$, denoted $N_S(\bar{x})$, consists of all vectors $v\in\R^n$ such that there are sequences $x_r\stackrel{S}{\rightarrow} \bar{x}$ and $v_r\rightarrow v$ with $v_r\in\hat{N}_S(x_r)$.}
\end{definition}

In the presence of convexity, normal cones have a much simpler form.
\begin{theorem}\cite[Theorem 6.9]{VA} For a convex set $S\subset\R^n$ and a point $\bar{x}\in S$, the regular and the limiting normal cones coincide, and consist of all vectors $v \in \R^n$ such that $$\langle v,x-\bar{x} \rangle \leq 0 \textrm{ for all }x\in S.$$ 
\end{theorem}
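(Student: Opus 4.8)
The plan is to prove the theorem through a chain of inclusions, writing $K = \{v \in \R^n : \langle v, x - \bar{x}\rangle \leq 0 \textrm{ for all } x \in S\}$ for the candidate cone on the right-hand side. The overall strategy is first to identify the regular normal cone $\hat N_S(\bar x)$ with $K$, exploiting convexity, and then to bootstrap this local description to force the limiting normal cone $N_S(\bar x)$ to collapse onto the same set.

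First I would check the easy inclusion $K \subseteq \hat N_S(\bar x)$. If $v \in K$, then $\langle v, x - \bar{x}\rangle \leq 0$ for every $x \in S$, which in particular satisfies the defining inequality $\langle v, x-\bar x\rangle \leq o(|x-\bar x|)$ of the regular normal cone, since the left-hand side is nonpositive while the error term vanishes relative to $|x-\bar x|$. Thus $v \in \hat N_S(\bar x)$.

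Next comes the heart of the argument: $\hat N_S(\bar x) \subseteq K$, where convexity does the essential work. Given $v \in \hat N_S(\bar x)$ and an arbitrary $x \in S$, I would use that the whole segment lies in $S$, so $x_t := \bar{x} + t(x - \bar{x}) \in S$ for $t \in (0,1]$. Feeding these points into the difference quotient, the factor $t$ cancels:
$$\frac{\langle v, x_t - \bar{x}\rangle}{|x_t - \bar{x}|} = \frac{\langle v, x - \bar{x}\rangle}{|x - \bar{x}|},$$
so the quotient is constant along the segment. Letting $t \to 0^+$ drives $x_t \to \bar x$ within $S$, and the regular-normal condition forces the limiting value of the quotient to be nonpositive; since the quotient never changed, $\langle v, x - \bar{x}\rangle \leq 0$. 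As $x \in S$ was arbitrary, $v \in K$, giving $\hat N_S(\bar x) = K$. I expect this cancellation-along-segments step to be the main (really the only) obstacle, since it is exactly where the infinitesimal $o(\cdot)$ condition must be upgraded to a global linear inequality, and it is the one place convexity is indispensable.

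Finally I would dispatch the limiting cone. The inclusion $\hat N_S(\bar x) \subseteq N_S(\bar x)$ holds in general, by taking constant sequences $x_r \equiv \bar x$ and $v_r \equiv v$, so it remains to prove $N_S(\bar x) \subseteq K$. Take $v \in N_S(\bar x)$ with witnessing sequences $x_r \stackrel{S}{\to} \bar x$ and $v_r \to v$ satisfying $v_r \in \hat N_S(x_r)$. Applying the previous step now based at $x_r$ gives $\langle v_r, y - x_r\rangle \leq 0$ for every $y \in S$, and passing to the limit in $r$ yields $\langle v, y - \bar{x}\rangle \leq 0$ for all $y \in S$, that is, $v \in K$. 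Combining the three steps produces $N_S(\bar x) = \hat N_S(\bar x) = K$, as claimed.
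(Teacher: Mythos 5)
Your proof is correct and complete: the paper states this result only as a citation to \cite[Theorem 6.9]{VA} without giving a proof, and your argument --- the trivial inclusion $K\subseteq\hat N_S(\bar x)$ (take the $o(\cdot)$ term identically zero), the segment-scaling step in which convexity makes the difference quotient constant along $\bar x + t(x-\bar x)$ and so upgrades the infinitesimal condition to the global inequality, and the limit passage $\langle v_r, y-x_r\rangle\leq 0 \Rightarrow \langle v,y-\bar x\rangle\leq 0$ for the limiting cone --- is exactly the standard argument for this fact. The only pedantic point is that the scaling step divides by $|x-\bar x|$ and so tacitly assumes $x\neq\bar x$, but the case $x=\bar x$ is trivial since then $\langle v, x-\bar x\rangle = 0 \leq 0$.
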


Normal cones allow us to study geometric objects. We now define subdifferentials, which allow us to analyze behavior of functions.
\begin{definition} 
{\rm Consider a function $f\colon\R^n\rightarrow\overline{\R}$ and a point $\bar{x}\in\R^n$ where $f$ is finite. The {\em regular} and the {\em limiting subdifferentials} of $f$ at $\bar{x}$, respectively, are defined by 
\begin{align*}
\hat{\partial}f(\bar{x})&~=~ \big\{v\in\R^n: (v,-1)\in \hat{N}_{\mbox{{\scriptsize {\rm epi}}}\, f}(\bar{x},f(\bar{x}))\big\},\\  
\partial f(\bar{x})&~=~ \big\{v\in\R^n: (v,-1)\in N_{\mbox{{\scriptsize {\rm epi}}}\, f}(\bar{x},f(\bar{x}))\big\}.
\end{align*}
}  
\end{definition}

If the function $f$ is convex, both subdifferentials reduce to the classical convex subdifferential,
\[
\big\{ v \in \R^n : \langle v,x - \bar x \rangle \le f(x) - f(\bar x)~ \mbox{for all}~x \in \R^n \big\}.
\]

\begin{remark} 
{\rm For $x\in\R^n$ where $f(x)$ is not finite, we follow the convention that $\hat{\partial}f(x)=\partial f(x)=\emptyset$.
The regular and the limiting subdifferentials are always closed sets, and the regular subdifferential is convex.}
\end{remark}

Subdifferentials play the role of generalized gradients in the following sense.
\begin{theorem}\cite[Exercise 8.8]{VA} \label{ex}
Consider a function $f\colon\R^n\to\overline{\R}$ and a point $\bar{x}\in\R^n$. If $f$ can be written as $f=g+h$, where $g$ is finite at $\bar{x}$ and $h$ is $\bf{C}^1$ smooth on a neighborhood of $\bar{x}$, then 
\begin{align*}
\partial f(\bar{x})&=\partial g(\bar{x})+\nabla h(\bar{x}),\\ 
\hat{\partial} f(\bar{x})&=\hat{\partial} g(\bar{x})+\nabla h(\bar{x}).
\end{align*}
\end{theorem}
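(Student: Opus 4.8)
The plan is to exploit the fact that adding the smooth function $h$ amounts to a $\bf{C}^1$ change of coordinates on the epigraph, under which regular normal cones transform in a controlled way. Concretely, consider the map $\Phi(x,r)=(x,\,r-h(x))$. Since $(x,r)\in\epi f$ exactly when $(x,\,r-h(x))\in\epi g$, the map $\Phi$ carries $\epi f$ onto $\epi g$ and sends the base point $(\bar x,f(\bar x))$ to $(\bar x,g(\bar x))$. Because $h$ is $\bf{C}^1$ near $\bar x$, the map $\Phi$ is a diffeomorphism near this point, hence bi-Lipschitz there; in particular the two moduli $|(x,r)-(\bar x,f(\bar x))|$ and $|\Phi(x,r)-(\bar x,g(\bar x))|$ are comparable, which is precisely what allows ``little-$o$'' error terms to pass back and forth between $\epi f$ and $\epi g$.

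First I would settle the regular subdifferential by a direct computation with the definition of the regular normal cone. Given $(x,r)\in\epi f$, write $s=r-h(x)$, so that $(x,s)\in\epi g$ and $s-g(\bar x)=(r-f(\bar x))-(h(x)-h(\bar x))$. Substituting the first-order expansion $h(x)-h(\bar x)=\langle\nabla h(\bar x),x-\bar x\rangle+o(|x-\bar x|)$ into the inequality defining $\hat N_{\epi g}(\bar x,g(\bar x))$, and using the comparability of moduli to absorb the error, one finds that $(v',\beta')\in\hat N_{\epi g}(\bar x,g(\bar x))$ if and only if $(v'-\beta'\nabla h(\bar x),\beta')\in\hat N_{\epi f}(\bar x,f(\bar x))$. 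Setting $\beta'=-1$ turns this equivalence into $v'\in\hat\partial g(\bar x)\iff v'+\nabla h(\bar x)\in\hat\partial f(\bar x)$, which is exactly the desired identity $\hat\partial f(\bar x)=\hat\partial g(\bar x)+\nabla h(\bar x)$.

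For the limiting subdifferential I would run the same substitution at a \emph{variable} base point. The computation above, carried out at an arbitrary $(x_0,r_0)\in\epi f$, shows that $(w,\beta)\in\hat N_{\epi g}(x_0,\,r_0-h(x_0))$ precisely when $(w-\beta\nabla h(x_0),\beta)\in\hat N_{\epi f}(x_0,r_0)$. Now take $(v,-1)\in N_{\epi f}(\bar x,f(\bar x))$ and choose sequences $(x_r,t_r)\to(\bar x,f(\bar x))$ in $\epi f$ and $(v_r,\beta_r)\to(v,-1)$ with $(v_r,\beta_r)\in\hat N_{\epi f}(x_r,t_r)$. Transporting each regular normal gives $(v_r+\beta_r\nabla h(x_r),\beta_r)\in\hat N_{\epi g}(x_r,\,t_r-h(x_r))$, where $(x_r,\,t_r-h(x_r))\to(\bar x,g(\bar x))$ by continuity of $h$, while $(v_r+\beta_r\nabla h(x_r),\beta_r)\to(v-\nabla h(\bar x),-1)$. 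Passing to the limit in the sequential definition of the limiting normal cone places $(v-\nabla h(\bar x),-1)$ in $N_{\epi g}(\bar x,g(\bar x))$, that is, $v\in\partial g(\bar x)+\nabla h(\bar x)$; the symmetric argument applied to the decomposition $g=f+(-h)$ supplies the reverse inclusion, hence equality.

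I expect the main obstacle to be this limiting case, and specifically the final passage to the limit: the transported normals involve $\nabla h$ evaluated at the \emph{moving} points $x_r$, so the convergence $(v_r+\beta_r\nabla h(x_r),\beta_r)\to(v-\nabla h(\bar x),-1)$ requires $\nabla h(x_r)\to\nabla h(\bar x)$. This is exactly where the full strength of the $\bf{C}^1$ hypothesis—continuity of the gradient, not merely differentiability at the single point $\bar x$—becomes indispensable: differentiability at $\bar x$ alone suffices for the regular subdifferential, but the outer-limit structure of the limiting subdifferential forces one to control $\nabla h$ on a whole neighborhood.
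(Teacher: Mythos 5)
Your proof is correct. There is nothing in the paper to compare it against: the statement is imported verbatim from \cite[Exercise 8.8]{VA} with no proof given, so the relevant comparison is with the textbook route. In Rockafellar--Wets the regular identity is obtained directly from the function-value characterization of $\hat{\partial}f$ (namely $f(x)\geq f(\bar x)+\langle v,x-\bar x\rangle+o(|x-\bar x|)$), absorbing the first-order expansion of $h$, and the limiting identity by passing to the limit along $f$-attentive sequences $x_r\to\bar x$, $f(x_r)\to f(\bar x)$, $v_r\in\hat{\partial}f(x_r)$. Your epigraphical shear $\Phi(x,r)=(x,\,r-h(x))$ is the geometric counterpart of the same computation, and it is arguably the more natural route here, since this paper defines both subdifferentials through normal cones to the epigraph rather than through function values: your variable-base-point equivalence $(w,\beta)\in\hat N_{\mbox{{\scriptsize {\rm epi}}}\,g}(x_0,\,r_0-h(x_0))\Leftrightarrow(w-\beta\nabla h(x_0),\beta)\in\hat N_{\mbox{{\scriptsize {\rm epi}}}\,f}(x_0,r_0)$ transports entire normal cones (all $\beta$, not only $\beta=-1$), which in particular covers epigraph points lying strictly above the graph, where regular normals are horizontal---points the paper's sequential definition of the limiting cone formally admits. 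Two details you handle correctly and should keep explicit in a full write-up: the regular normal cone is a local notion, so the shear need only be defined on $U\times\R$ for the neighborhood $U$ on which $h$ is $\bf{C}^1$ (with $x_r\in U$ eventually along your sequences); and your closing diagnosis is exactly right---differentiability of $h$ at the single point $\bar x$ already yields the regular identity, while the limiting identity genuinely consumes continuity of the gradient through $\nabla h(x_r)\to\nabla h(\bar x)$, which is precisely how the hypotheses are graded in the source exercise.
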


\begin{theorem}\cite[Theorem 12.12, 12.17]{VA}\label{thm:minty}
Let $f\colon\R^n\to\overline{\R}$ be a proper, convex function. Then on the set where the set-valued mapping 
$(I+\partial f)^{-1}$ takes nonempty values, it is single-valued and Lipschitz continuous with constant $1$. 
\end{theorem}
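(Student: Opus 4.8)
The plan is to exploit the \emph{monotonicity} of the convex subdifferential, from which both single-valuedness and the Lipschitz estimate will drop out of a single short computation. Recall that, by the convex characterization of $\partial f$ recalled above, the subgradient inequality gives, for any $v_1\in\partial f(x_1)$ and $v_2\in\partial f(x_2)$, the two bounds $f(x_2)\geq f(x_1)+\langle v_1,x_2-x_1\rangle$ and $f(x_1)\geq f(x_2)+\langle v_2,x_1-x_2\rangle$; adding them and cancelling the function values yields the monotonicity relation $\langle v_1-v_2,\,x_1-x_2\rangle\geq 0$.

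Next I would unwind the definition of the resolvent. Suppose $x_i\in(I+\partial f)^{-1}(y_i)$ for $i=1,2$, which is to say $y_i\in x_i+\partial f(x_i)$, or equivalently $y_i-x_i\in\partial f(x_i)$. Applying the monotonicity relation with $v_i=y_i-x_i$ gives
\[
\langle (y_1-x_1)-(y_2-x_2),\; x_1-x_2\rangle \;\geq\; 0,
\]
which rearranges to $\langle y_1-y_2,\,x_1-x_2\rangle\geq |x_1-x_2|^2$. The Cauchy--Schwarz inequality bounds the left-hand side above by $|y_1-y_2|\,|x_1-x_2|$, so that $|x_1-x_2|^2\leq |y_1-y_2|\,|x_1-x_2|$, and hence $|x_1-x_2|\leq |y_1-y_2|$.

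This single inequality delivers both conclusions. Choosing $y_1=y_2$ forces $x_1=x_2$, so the mapping is single-valued wherever it takes nonempty values; and for arbitrary arguments the very same inequality is exactly the assertion that the (now single-valued) map is Lipschitz continuous with constant $1$.

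I do not anticipate a genuine obstacle: this is the standard fact that the resolvent of the maximal monotone operator $\partial f$ is (firmly) nonexpansive. The only points requiring care are to invoke the convex form of the subdifferential to license the two subgradient inequalities, and to notice that single-valuedness and the Lipschitz bound are simply two readings of the one estimate $|x_1-x_2|\leq |y_1-y_2|$; no compactness, differentiability, or measure-theoretic input is needed.
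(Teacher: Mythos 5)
Your proof is correct. The paper itself gives no proof of this statement---it is quoted from \cite[Theorem 12.12, 12.17]{VA} and attributed to Minty---and your argument (monotonicity of $\partial f$ from the two subgradient inequalities, then $\langle y_1-y_2,x_1-x_2\rangle\geq|x_1-x_2|^2$ and Cauchy--Schwarz) is exactly the standard proof of the cited fact, correctly requiring no maximality since the claim is restricted to the set where $(I+\partial f)^{-1}$ is nonempty; the only point worth making explicit is that cancelling $f(x_1)$ and $f(x_2)$ is legitimate because nonemptiness of $\partial f(x_i)$ together with properness of $f$ forces these values to be finite.
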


\begin{remark}
{\rm Theorem~\ref{thm:minty} is a special case of the celebrated theorem of Minty. See \cite{minty} or \cite[Section 12.B]{VA} for more details.}
\end{remark}

We now define a large and robust class of functions that includes both smooth functions and finite convex functions. 

\begin{definition}\cite[Theorem 10.33]{VA}\label{thm:char}
{\rm
A function $f\colon O\to{\R}$, where $O$ is an open set in $\R^n$, is said to be {\em lower-$\bf{C}^2$} on 
$O$, if for each point $\bar{x} \in O$, there is a neighborhood around $\bar{x}$ and a scalar $\rho$ such that on this neighborhood $f+\rho|\cdot|^2$ is a finite convex function.
}
\end{definition}

\noindent
By Theorem \ref{ex}, the regular and limiting subdifferentials coincide for lower-$\bf{C}^2$ functions.

\begin{remark}
{\rm To illustrate the abundance of lower-$\bf{C}^2$ functions, consider the following example. Given $\bf{C}^2$ functions $f_i\colon O\to\R$ on an open set $O\subset\R^n$ ($i=1,\ldots,m$), the function $f=\max\{f_1,\ldots,f_m\}$ is lower-$\bf{C}^2$ on $O$. For more details see \cite[Chapter 10.F]{VA}.
}
\end{remark}

\subsection{Hausdorff Measures}

For a set $U\subset\R^n$, let $\mbox{\rm diam}\, U$ denote its diameter, that is $$\mbox{\rm diam}(U)=\sup_{x,y\in U} |x-y|.$$
\begin{definition}
{\rm Consider a set $S\subset\R^n$ and real numbers $\delta,d > 0$. We define $$\lambda_d^{\delta}(S)=\inf\Big\{\sum_{i=1}^{\infty}\mbox{\rm diam}(U_i)^d: S\subset\bigcup_{i=1}^{\infty}U_i,\, \mbox{\rm diam}(U_i)<\delta\Big\}.$$ 
}
\end{definition}
Observe the infimum in the definition above is taken over all countable covers $\{U_i\}$ of $S$, such that $\mbox{\rm diam}(U_i)<\delta$ for each $i$.

\begin{definition}
{\rm
For a set $S\subset\R^n$, define the {\em $d$-dimensional Hausdorff measure} of $S$ to be $$\lambda_d(S)=\lim_{\delta\rightarrow 0} \lambda_d^{\delta}(S).$$}
\end{definition}

It can be shown that for each $d>0$, the set function $\lambda_d$ is an outer measure on $\R^n$. Furthermore, if $d$ is a positive integer, then on Lebesgue measurable sets in $\R^d$ the $d$-dimensional Hausdorff measure is a rescaling of the $d$-dimensional Lebesgue measure. For more details, see \cite{hausdorff}. The following is an easy consequence of the definition of Hausdorff measure.

\begin{proposition}\label{prop:lip}  
Consider a set $S\subset\R^n$ and let $f\colon S\to\R^m$ be a Lipschitz continuous mapping with Lipschitz constant $\kappa$. Then for any real number $d>0$, we have $\lambda_d(f(S))\leq \kappa^d \lambda_d(S)$. 
\end{proposition}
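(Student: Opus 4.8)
The plan is to exploit the Lipschitz bound directly at the level of covers, since the Hausdorff measure is built from diameters of covering sets and those diameters can only shrink by a factor of $\kappa$ under $f$. First I would fix a scale $\delta > 0$ and consider an arbitrary countable cover $\{U_i\}$ of $S$ by sets with $\mbox{\rm diam}(U_i) < \delta$. Because $f$ is defined only on $S$, I would pass to the sets $U_i \cap S$; their images $f(U_i \cap S)$ then cover $f(S)$. The Lipschitz estimate gives $\mbox{\rm diam}\, f(U_i \cap S) \leq \kappa\, \mbox{\rm diam}(U_i \cap S) \leq \kappa\, \mbox{\rm diam}(U_i)$, so in particular each image set has diameter strictly less than $\kappa\delta$.

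This makes $\{f(U_i \cap S)\}$ an admissible cover for computing $\lambda_d^{\kappa\delta}(f(S))$, whence
\[
\lambda_d^{\kappa\delta}(f(S)) \leq \sum_{i=1}^{\infty} \big(\mbox{\rm diam}\, f(U_i \cap S)\big)^d \leq \kappa^d \sum_{i=1}^{\infty} \big(\mbox{\rm diam}(U_i)\big)^d.
\]
Taking the infimum over all such covers $\{U_i\}$ of $S$ then yields $\lambda_d^{\kappa\delta}(f(S)) \leq \kappa^d \lambda_d^{\delta}(S)$.

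The final step is to let $\delta \downarrow 0$. Assuming $\kappa > 0$, the quantity $\kappa\delta$ also tends to $0$, so the left-hand side converges to $\lambda_d(f(S))$ while the right-hand side converges to $\kappa^d \lambda_d(S)$, giving the claimed inequality. The degenerate case $\kappa = 0$ I would dispatch separately and trivially: then $f$ is constant, $f(S)$ is a single point, and $\lambda_d$ of a point vanishes for every $d > 0$.

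I would be honest that this argument involves essentially no real obstacle; the only point needing care is the bookkeeping of scales, namely that a cover of $S$ fine at scale $\delta$ produces a cover of $f(S)$ fine only at scale $\kappa\delta$, so the comparison of the pre-limit measures $\lambda_d^{\delta}$ must be made at matched scales before passing to the limit. I would also record the harmless conventions that $\mbox{\rm diam}(\emptyset) = 0$ and that any empty pieces $U_i \cap S$ contribute nothing to the sums above.
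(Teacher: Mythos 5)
Your proof is correct and is precisely the standard argument the paper has in mind: the paper states this proposition without proof, remarking only that it is ``an easy consequence of the definition of Hausdorff measure,'' and pushing covers forward through $f$ with the matched-scale comparison $\lambda_d^{\kappa\delta}(f(S)) \leq \kappa^d \lambda_d^{\delta}(S)$ is exactly that easy consequence. Your attention to the scale bookkeeping and the degenerate case $\kappa=0$ is appropriate and complete.
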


\begin{corollary}\label{cor:local_lip}
Consider a set $S\subset\R^n$ and let $f\colon S\to\R^m$ be a locally Lipschitz mapping. Then for any real number $d>0$, if $\lambda_d(S)=0$ then $\lambda_d(f(S))=0$. 
\end{corollary}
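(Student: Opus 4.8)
The plan is to reduce the corollary to Proposition~\ref{prop:lip} by covering $S$ with countably many pieces on each of which $f$ is genuinely (globally) Lipschitz, and then to combine the resulting estimates using the fact, noted just after the definition of $\lambda_d$, that $\lambda_d$ is an outer measure. First I would unpack local Lipschitzness: for each point $x\in S$ there is an open ball $B_x$ centered at $x$ and a constant $\kappa_x>0$ such that the restriction of $f$ to $S\cap B_x$ is Lipschitz continuous with constant $\kappa_x$. This gives an open cover $\{B_x:x\in S\}$ of $S$.

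The key step is to extract a countable subcover. Since $\R^n$ is second countable, hence Lindel\"of, the cover $\{B_x:x\in S\}$ admits a countable subcover $\{B_{x_i}\}_{i=1}^{\infty}$. Setting $S_i=S\cap B_{x_i}$, we obtain $S=\bigcup_{i=1}^{\infty}S_i$, where on each piece $S_i$ the map $f$ is Lipschitz with constant $\kappa_i:=\kappa_{x_i}$. Consequently $f(S)=\bigcup_{i=1}^{\infty}f(S_i)$.

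It then remains to push the hypothesis $\lambda_d(S)=0$ through each piece. Because $\lambda_d$ is an outer measure, it is monotone, so $\lambda_d(S_i)\leq\lambda_d(S)=0$. Applying Proposition~\ref{prop:lip} to the genuinely Lipschitz restriction $f\colon S_i\to\R^m$ gives $\lambda_d(f(S_i))\leq\kappa_i^{d}\,\lambda_d(S_i)=0$. Finally, countable subadditivity of the outer measure $\lambda_d$ yields
\[
\lambda_d(f(S))\;\leq\;\sum_{i=1}^{\infty}\lambda_d(f(S_i))\;=\;0,
\]
as required. The argument is essentially routine: the only substantive point is the passage from local to global Lipschitz behavior via a countable subcover, which relies on separability of $\R^n$ together with the monotonicity and countable subadditivity of $\lambda_d$; I do not anticipate any genuine obstacle beyond bookkeeping.
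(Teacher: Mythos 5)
Your proposal is correct and follows essentially the same route as the paper's own proof: a countable subcover (via Lindel\"of/second countability) of neighborhoods on which $f$ is Lipschitz, Proposition~\ref{prop:lip} applied piecewise, and countable subadditivity of $\lambda_d$ to conclude. If anything, your write-up is slightly more careful than the paper's in making the Lindel\"of extraction and the monotonicity step explicit.
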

\begin{proof}
Around each point $x\in S$, consider a neighborhood on which $f$ is Lipschitz continuous. This collection of neighborhoods forms a cover of $S$, and hence there is a countable subcover, say $\{V_i\}$. By Proposition~\ref{prop:lip}, for each index $i$ we have $\lambda_d(f(V_i))=0$, and hence $$\lambda_d(f(S))=\lambda_d(\bigcup_{i=1}^{\infty}f(V_i))\leq\linf_{n\rightarrow\infty} \displaystyle\sum\limits_{i=1}^{n} \lambda_d(V_i)=0,$$  as claimed.   
\end{proof}

\begin{definition}
Consider a compact, convex set $F\subset\R^n$. The set of maximizers $\argmax_{x\in F} \langle c,x\rangle$ is called the {\em exposed face} of the set
$F$ corresponding to the vector $c$. In particular, the set $F$ is itself an exposed
face (corresponding to $c = 0$). All other exposed faces are said to be {\em proper}.
\end{definition}

For a convex set $S\subset\R^n$, we will denote its closure, relative interior, and relative boundary by $\cl S$, $\ri S$, and $\rb S$, respectively. To prove the main result, we will need the following two theorems.
\begin{theorem}(Larman)\cite{Larman}\label{thm:larm} Let $S\subset\R^n$ be a compact convex set. Let $N$ be the union of the relative boundaries of all the proper exposed faces. Then $\lambda_{n-1}(N)=0$. 
\end{theorem}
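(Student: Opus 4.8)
The plan is to induct on the dimension $n$ and, within the inductive step, to reduce to the case where $S$ is full-dimensional. If $\dim S = d < n$, then $S$ lies in a $d$-dimensional affine subspace $A$, every proper exposed face and each of their relative boundaries lie in $A$, and the inductive hypothesis applied inside $A \cong \R^{d}$ shows that $N$ is $\lambda_{d-1}$-null, hence also $\lambda_{n-1}$-null since higher-dimensional Hausdorff measure vanishes on $\lambda_{d-1}$-null sets. So I would assume $S$ has nonempty interior, and write $N = \bigcup_{k=1}^{n-1} N_k$, where $N_k$ is the union of the relative boundaries $\rb F$ over all proper exposed faces $F$ with $\dim F = k$ (faces of dimension $0$ have empty relative boundary). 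As this is a finite union, it suffices to show $\lambda_{n-1}(N_k)=0$ for each $k$.

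For the top dimension $k=n-1$ I would argue that there are only countably many such faces. Distinct exposed faces have disjoint relative interiors; each $(n-1)$-dimensional exposed face is relatively open in a supporting hyperplane and so has relative interior of positive $\lambda_{n-1}$-measure; and the boundary of $S$ has finite $\lambda_{n-1}$-measure, being covered by finitely many Lipschitz graphs over bounded domains (so that Proposition~\ref{prop:lip} bounds its measure). A disjoint family of positive-measure sets inside a finite-measure set is countable, so $N_{n-1}$ is a countable union of $(n-2)$-dimensional sets and therefore $\lambda_{n-1}(N_{n-1})=0$.

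The cases $1 \le k \le n-2$ form the crux. Each positive-dimensional exposed face arises as $F_u = \argmax_{x\in S}\langle u,x\rangle$ for some unit vector $u$, and I would organize the argument around the set of \emph{exposing directions} $B_k=\{u : \dim F_u = k\}$. Consider $\Gamma_k=\{(u,x): u\in B_k,\ x\in \rb F_u\}$; the coordinate projection $(u,x)\mapsto x$ is Lipschitz and carries $\Gamma_k$ onto $N_k$, so by Corollary~\ref{cor:local_lip} it is enough to prove that $\Gamma_k$ is $\lambda_{n-1}$-null, and for this I would show it has Hausdorff dimension at most $n-2$. Over each $u\in B_k$ the fibre $\rb F_u$ is the relative boundary of a $k$-dimensional convex set, hence of dimension $k-1$; the essential point is then that the base $B_k$ is itself small, of dimension at most $n-1-k$. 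Granting this, the dimensions balance exactly, $(n-1-k)+(k-1)=n-2$, yielding $\dim_H N_k \le n-2$ and so $\lambda_{n-1}(N_k)=0$.

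The main obstacle is precisely the bound on the size of the exposing directions: the assertion that the unit directions whose exposed face is $k$-dimensional form a set of $\sigma$-finite $(n-1-k)$-dimensional Hausdorff measure. This is the genuine geometric-measure-theoretic content — equivalently, for the support function $h_S(u)=\max_{x\in S}\langle u,x\rangle$, whose subdifferential is $\partial h_S(u)=F_u$, the statement that the singularity stratum $\{u:\dim\partial h_S(u)\ge k\}$ has codimension at least $k$, in the spirit of Ewald--Larman--Rogers on the directions of the faces of a convex body. I would establish it by covering $B_k$ using a measurable selection of the affine hulls of the faces together with the monotonicity of the subdifferential of $h_S$, the same parametrization realizing $N_k$ as an $(n-2)$-rectifiable set on which Proposition~\ref{prop:lip} and Corollary~\ref{cor:local_lip} close the estimate. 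Care is needed here because the naive fibre inequality for Hausdorff dimension is \emph{false} without such rectifiable structure (a graph over a one-dimensional base with point fibres can have dimension two); thus controlling how $\rb F_u$ varies with $u$, rather than merely its pointwise dimension, is where the real work lies.
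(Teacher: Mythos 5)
Note first that the paper does not prove Theorem~\ref{thm:larm} at all: it is imported from Larman's paper as a black box, so your proposal is being measured against Larman's original argument, a genuinely hard piece of geometric measure theory. Your outer reductions are correct: the induction disposing of the case $\dim S<n$, the decomposition $N=\bigcup_k N_k$ by face dimension (zero-dimensional faces having empty relative boundary), and the countability argument for $k=n-1$ (distinct exposed faces have disjoint relative interiors, each $(n-1)$-dimensional one has positive $\lambda_{n-1}$-measure, and the boundary of a full-dimensional compact convex body has finite $\lambda_{n-1}$-measure) are all sound, and that case does reduce to a countable union of $(n-2)$-dimensional sets.

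The genuine gap is exactly where you locate it, in the range $1\le k\le n-2$, and it is not closed: the argument rests on two assertions that are stated but never established. First, the bound that the exposing directions $B_k=\{u:\dim F_u=k\}$ have dimension at most $n-1-k$: you invoke Ewald--Larman--Rogers, but their theorem concerns the directions of segments and balls \emph{lying in} the boundary of the body, not the normal directions exposing $k$-dimensional faces; the statement you actually need is of Zaj\'{\i}\v{c}ek type --- that the set where the subdifferential of the support function $h_S$ has dimension at least $k$ is covered by countably many Lipschitz surfaces of dimension $n-k$ --- and no proof or precise citation is given. Second, and more critically, the fibre-dimension addition $\dim_H\Gamma_k\le(n-1-k)+(k-1)$: as you yourself observe, this inequality is simply false for a general fibred set, and the proposed remedy (``a measurable selection of the affine hulls of the faces together with the monotonicity of the subdifferential'') is a plan rather than an argument. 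Measurable selections yield measurability, not dimension bounds; the face map $u\mapsto F_u$ is only upper semicontinuous and can collapse or jump discontinuously, so no Lipschitz parametrization of $\Gamma_k$ transverse to the fibres is evident, and producing one --- or circumventing it by direct covering estimates, as Larman does --- is precisely the content of the theorem. Note also that the Lipschitz structure furnished by Minty's theorem (Theorem~\ref{thm:minty}) parametrizes the graph of $\partial h_S$, not the locus of \emph{relative boundaries} of its values, so it does not supply the missing rectifiability either. As it stands, the proposal reduces the theorem to its hardest step and then asserts that step.
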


\begin{theorem}\cite[Proposition 3]{gen}\label{thm:lewis}
Suppose zero lies in the interior of the compact convex set $F\subset\R^n$. Then the proper exposed faces of the polar set $F^{\circ}$ are those sets of the form $$G=\{c\in N_F(x):\langle c,x\rangle=1\},$$ for points $x$ on the boundary of F. Furthermore, any such exposed face has relative interior given by $$\ri G=\{c\in \ri N_F(x):\langle c,x\rangle= 1\}.$$
\end{theorem}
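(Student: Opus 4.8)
The plan is to realize the polar $F^{\circ}=\{c\in\R^n:\langle c,x\rangle\le 1\ \text{for all}\ x\in F\}$ and to parametrize its exposed faces by their exposing directions. Writing $E_w=\argmax_{c\in F^{\circ}}\langle w,c\rangle$ for the exposed face in direction $w$, I first note that compactness of $F$ together with $0\in\mathrm{int}\,F$ yields $0\in\mathrm{int}\,F^{\circ}$ (if $|x|\le R$ on $F$ then the ball of radius $1/R$ lies in $F^{\circ}$), so for every $w\neq 0$ the functional $\langle w,\cdot\rangle$ takes both signs near the origin and is thus nonconstant on $F^{\circ}$; hence the proper exposed faces are exactly the faces $E_w$ with $w\neq 0$. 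For such a $w$ the maximum $M=\max_{c\in F^{\circ}}\langle w,c\rangle$ is strictly positive (evaluate at a small positive multiple of $w$, which lies in $F^{\circ}$), so after rescaling $x=w/M$ I may assume the exposing direction satisfies $\max_{c\in F^{\circ}}\langle x,c\rangle=1$, giving $E_w=E_x=\{c\in F^{\circ}:\langle x,c\rangle=1\}$.

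The crux of the first assertion is then the elementary identity
\[
\{c\in F^{\circ}:\langle c,x\rangle=1\}=\{c\in N_F(x):\langle c,x\rangle=1\}\qquad(x\in F).
\]
Both inclusions are immediate from the convex normal-cone formula \cite[Theorem 6.9]{VA}: if $\langle c,x\rangle=1$ and $c\in F^{\circ}$ then $\langle c,y\rangle\le 1=\langle c,x\rangle$ for all $y\in F$, so $c\in N_F(x)$; conversely $c\in N_F(x)$ with $\langle c,x\rangle=1$ gives $\langle c,y\rangle\le\langle c,x\rangle=1$ for all $y\in F$, so $c\in F^{\circ}$. It remains to see that the normalized direction $x$ lies on the boundary of $F$. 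Since $\langle x,c\rangle\le 1$ for every $c\in F^{\circ}$, the bipolar theorem $(F^{\circ})^{\circ}=F$ places $x\in F$; and if $x$ were interior, choosing $\delta>0$ with $x+\delta u\in F$ whenever $|u|\le 1$ and testing a maximizer $c^{*}$ (with $\langle x,c^{*}\rangle=1$) would give $\delta\langle u,c^{*}\rangle\le 0$ for all such $u$, forcing $c^{*}=0$ and contradicting $\langle x,c^{*}\rangle=1$. Thus $x\in\mathrm{bd}\,F$ and $E_w=G$. Conversely, for any $x\in\mathrm{bd}\,F$ the same identity shows that $G=\{c\in N_F(x):\langle c,x\rangle=1\}$ is the exposed face $E_x$ of $F^{\circ}$, which is proper because $\langle x,\cdot\rangle$ is nonconstant on $F^{\circ}$ when $x\neq 0$. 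This establishes the claimed description of the proper exposed faces.

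For the relative interior, the key geometric fact — which I expect to be the main point — is that
\[
\langle c,x\rangle>0\quad\text{for every nonzero }c\in N_F(x).
\]
Indeed, $0\in\mathrm{int}\,F$ gives $\{y:|y|\le\epsilon\}\subset F$ for some $\epsilon>0$, and testing the normal-cone inequality at $y=\epsilon c/|c|$ yields $\langle c,x\rangle\ge\epsilon|c|>0$. Consequently $N_F(x)$ is a pointed cone on which $\langle\cdot,x\rangle$ is strictly positive off the origin. Writing $H=\{c:\langle c,x\rangle=1\}$, so that $G=N_F(x)\cap H$, I pick any $\hat c\in\ri N_F(x)$; since $G\neq\emptyset$ the cone is nontrivial, so its apex is not relatively interior and $\hat c\neq 0$, whence $\langle\hat c,x\rangle>0$. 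Rescaling a cone by a positive factor preserves its relative interior, so $\hat c/\langle\hat c,x\rangle\in\ri N_F(x)\cap H$, showing $\ri N_F(x)\cap\ri H\neq\emptyset$. The standard formula for the relative interior of an intersection of convex sets whose relative interiors meet (Rockafellar, \emph{Convex Analysis}, Theorem 6.5) then gives
\[
\ri G=\ri\big(N_F(x)\cap H\big)=\ri N_F(x)\cap H=\{c\in\ri N_F(x):\langle c,x\rangle=1\},
\]
which is the second assertion. The only delicate ingredient is verifying that $H$ meets the relative interior of the cone; the strict positivity of $\langle\cdot,x\rangle$ makes this automatic via the rescaling argument, and everything else is bookkeeping.
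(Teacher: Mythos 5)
The paper itself contains no proof of this statement: it is imported verbatim from \cite[Proposition 3]{gen}, so there is no internal argument to compare yours against. Taken on its own terms, your proof is essentially correct and self-contained: the parametrization of proper exposed faces of $F^{\circ}$ by nonzero directions (proper exactly because $0\in\mathrm{int}\,F^{\circ}$ makes every nonzero linear functional nonconstant on $F^{\circ}$), the normalization $x=w/M$ with $M>0$, the two-line identity $\{c\in F^{\circ}:\langle c,x\rangle=1\}=\{c\in N_F(x):\langle c,x\rangle=1\}$ for $x\in F$, the bipolar theorem to place $x$ in $F$ and the perturbation argument to push it to the boundary, and the relative-interior computation via strict positivity of $\langle\cdot,x\rangle$ on $N_F(x)\setminus\{0\}$ combined with Rockafellar's intersection formula are all sound, including the observation that $0\notin\ri N_F(x)$ because a salient nontrivial convex cone cannot have the origin in its relative interior (otherwise it would be a subspace), so your $\hat c$ is indeed nonzero.

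One step is compressed to the point of hiding a needed ingredient. In the converse direction you assert that for any $x\in\mathrm{bd}\,F$ ``the same identity shows'' that $G$ is the exposed face $E_x$. The identity gives $G=\{c\in F^{\circ}:\langle c,x\rangle=1\}$, but by definition $E_x=\{c\in F^{\circ}:\langle x,c\rangle=M'\}$ where $M'=\max_{c\in F^{\circ}}\langle x,c\rangle$, and you must rule out $M'<1$ --- equivalently, show $G\neq\emptyset$. This matters for the first assertion as stated: exposed faces of the nonempty compact set $F^{\circ}$ are nonempty, so an empty $G$ could not be one. The fix uses only ingredients already in your write-up, so you should make it explicit: by the supporting hyperplane theorem at the boundary point $x$ there is a nonzero $c\in N_F(x)$; your strict-positivity computation gives $\langle c,x\rangle\geq\epsilon|c|>0$; hence $c/\langle c,x\rangle\in G$, forcing $M'=1$ and $E_x=G$. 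Note that you silently invoke $G\neq\emptyset$ a second time in the relative-interior argument (``since $G\neq\emptyset$ the cone is nontrivial''); there it is legitimate because $G$ is by then a proper exposed face, but that legitimacy rests on the nonemptiness step above, so the supporting-hyperplane sentence is genuinely load-bearing and should appear in the proof.
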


\section{Main Result}
\subsection{Subdifferentials of Convex Functions}
The unit sphere in $\R^n$ will be denoted by $\mathbb{S}^{n-1}$, and an open ball of radius $r$ around a point $x\in\R^n$ will be denoted by $B(x,r)$.
\begin{lemma}\label{lem:normal}
Let $F\subset\R^n$ be a convex set. Then 
$$\lambda_{n-1}\Big((\bigcup_{x\in F}\rb N_F(x))\cap\mathbb{S}^{n-1}\Big)=0.$$
\end{lemma}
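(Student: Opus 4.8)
The plan is to reduce to a compact convex body containing the origin in its interior, and there to use Theorem~\ref{thm:lewis} to match relative boundaries of the normal cones $N_F(x)$ with relative boundaries of the proper exposed faces of the polar $F^{\circ}$, to which Larman's Theorem~\ref{thm:larm} then applies. First I would make three reductions. Replacing $F$ by $\cl F$ changes no normal cone at a point of $F$, so I assume $F$ is closed. Since $N_{F+a}(x+a)=N_F(x)$, the set in question is translation invariant, so I translate a point of $\ri F$ to the origin and assume $0\in\ri F$. To remove unboundedness, put $F_r=F\cap r\overline{B}$; because the regular normal cone is local and coincides with $N$ for convex sets, we have $N_{F_r}(x)=N_F(x)$ whenever $|x|<r$, and so $\bigcup_{x\in F}\rb N_F(x)$ is contained in $\bigcup_{r\in\mathbb{N}}\bigcup_{x\in F_r}\rb N_{F_r}(x)$. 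A countable union of $\lambda_{n-1}$-null sets is null, so it suffices to treat each compact $F_r$, noting $0\in\ri F_r$ for large $r$.

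Assume then that $F$ is compact and, to begin with, full-dimensional, so that $0\in\mathrm{int}\,F$ and $\rho\overline{B}\subseteq F$ for some $\rho>0$. Interior points $x$ give $N_F(x)=\{0\}$ and contribute nothing, so only boundary points matter. For such $x$, every nonzero $c\in N_F(x)$ satisfies $\langle c,x\rangle\geq\rho|c|>0$ (take $y=\rho c/|c|\in F$ in the defining inequality), so $\rb N_F(x)$ consists of the positive multiples of its slice $G_x=\{c\in N_F(x):\langle c,x\rangle=1\}$. Theorem~\ref{thm:lewis} identifies the sets $G_x$ with the proper exposed faces of $F^{\circ}$ and gives $\ri G_x=\{c\in\ri N_F(x):\langle c,x\rangle=1\}$, hence $\rb G_x=\{c\in\rb N_F(x):\langle c,x\rangle=1\}$. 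Thus, away from the origin, $\bigcup_{x}\rb N_F(x)$ is the set of positive multiples of $N:=\bigcup\{\rb G:\,G\text{ a proper exposed face of }F^{\circ}\}$, and its intersection with $\mathbb{S}^{n-1}$ equals the image $p(N)$ under the radial projection $p(c)=c/|c|$. Larman's Theorem~\ref{thm:larm} applied to the compact convex set $F^{\circ}$ gives $\lambda_{n-1}(N)=0$; as $0\in\mathrm{int}\,F^{\circ}$ we have $N\subseteq\R^n\setminus\{0\}$, where $p$ is locally Lipschitz, so Corollary~\ref{cor:local_lip} gives $\lambda_{n-1}(p(N))=0$.

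For the lower-dimensional case, let $L=\mathrm{aff}\,F$ be the $k$-dimensional subspace spanned by $F$. Since $y-x\in L$ for $x,y\in F$, one checks $N_F(x)=\big(N_F(x)\cap L\big)\oplus L^{\perp}$, and therefore $\bigcup_{x}\rb N_F(x)=M\oplus L^{\perp}$, where $M\subseteq L$ is the analogous union of relative boundaries of the normal cones of $F$ taken within $L$. The previous paragraph, applied inside $L\cong\R^k$, shows that $M$ meets the unit sphere of $L$ in a $\lambda_{k-1}$-null set; since $M$ is a cone, integration in polar coordinates promotes this to $\lambda_k(M)=0$ in $L$, and Fubini then gives $\lambda_n(M\oplus L^{\perp})=0$ in $\R^n$. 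A final application of the polar-coordinate identity in $\R^n$ converts this into $\lambda_{n-1}\big((M\oplus L^{\perp})\cap\mathbb{S}^{n-1}\big)=0$, as desired.

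I expect the conceptual core, and the main obstacle, to be the full-dimensional step: seeing, via Theorem~\ref{thm:lewis}, that the relative boundaries of the cones $N_F(x)$ assemble exactly into the radial cone over the relative boundaries of the proper exposed faces of $F^{\circ}$, which is precisely what makes Larman's theorem applicable. By comparison, the reductions to the closed, origin-interior, and compact cases rest on the locality of the convex normal cone, and the lower-dimensional case is routine dimension bookkeeping, provided one is careful about passing between Hausdorff- and Lebesgue-null sets.
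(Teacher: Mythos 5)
Your proposal is correct and follows essentially the same route as the paper's own proof: the same reductions to a closed, compact set with $0$ in its (relative) interior via translation and the exhaustion $F\cap r\overline{B}$, the same key combination of Larman's Theorem~\ref{thm:larm} applied to $F^{\circ}$ with Theorem~\ref{thm:lewis} to identify the slices $\{c\in\rb N_F(x):\langle c,x\rangle=1\}$ with relative boundaries of proper exposed faces, and the same locally Lipschitz radial projection together with Corollary~\ref{cor:local_lip} to transfer nullity to the sphere. The only difference is cosmetic: you work out in detail the lower-dimensional case (the decomposition $N_F(x)=(N_F(x)\cap L)\oplus L^{\perp}$ with the Fubini and polar-coordinate bookkeeping), which the paper dispatches in a single clause by viewing $\R^n$ as the direct sum of the span of $F$ and its orthogonal complement.
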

\begin{proof} 
Observe that $N_F(x)=N_{\mbox{\scriptsize {\rm cl}}\, F}(x)$ for $x\in F$, so it is sufficient to show that the statement of the lemma holds for a closed convex set $F$. First, let us consider the case when $F$ is a compact convex set. Without loss of generality, we can assume that zero is in the interior of $F$, since otherwise we can translate $F$, so as to have $0\in \ri F$, and then consider $\R^n$ as the direct sum of the span of $F$ and its orthogonal complement. Define $$G:=\bigcup_{x\in F}\{c\in \rb N_F(x):\langle c,x\rangle= 1\}.$$ Combining Theorems~\ref{thm:larm} and~\ref{thm:lewis} , we deduce $\lambda_{n-1}(G) = 0$. Observe that $G$ is contained in $\R^n\setminus \{0\}$.
Now consider the mapping $$f:\R^n\setminus \{0\}\rightarrow\mathbb{S}^{n-1},$$
$$x\mapsto |x|^{-1}x.$$ The mapping $f$ is locally Lipschitz. Consequently, by Corollary~\ref{cor:local_lip}, we have $\lambda_{n-1}(f(G))=0$. Observe that the image set $f(G)$ is contained in $(\bigcup_{x\in F}\rb N_F(x))\cap\mathbb{S}^{n-1}$, since $f$ simply scales each element of $G$. Now, to see the reverse inclusion, consider a vector $c\in (\rb N_F(\bar{x}))\cap\mathbb{S}^{n-1}$ for some vector $\bar{x}\in F$. By definition of the normal cone, we have $$\langle c, \bar{x}-x\rangle \geq 0, ~~\text{{\rm for all }} x\in F.$$ In particular, since $0$ lies in the interior of $F$, we have $\langle c,\bar{x}\rangle > 0$. So we deduce $\widehat{c}:= |\langle c,\bar{x}\rangle|^{-1}c\in G$ and $f(\widehat{c})=c$. Thus we have shown $$f(G)= (\bigcup_{x\in F}\rb N_F(x))\cap\mathbb{S}^{n-1},$$ and consequently $$\lambda_{n-1}\Big((\bigcup_{x\in F}\rb N_F(x))\cap\mathbb{S}^{n-1}\Big)=0,$$ as we claimed. 

To get rid of the boundedness assumption on $F$, we will use a standard limiting argument. Assume that $F$ is a closed convex set that is not necessarily bounded. For a positive integer $k$, let $F_k = F\cap B(0,k)$. Observe 
\begin{align*} F_k&\uparrow F,\\ 
\Big(\bigcup_{x\in B(0,k)\cap F}\rb N_F(x)\Big)&\uparrow \Big(\bigcup_{x\in F}\rb N_F(x)\Big). 
\end{align*}
Thus we have 
\begin{align*}
\lambda_{n-1}\Big((\bigcup_{x\in F}\rb N_F(x))\cap\mathbb{S}^{n-1}\Big)&=\lim_{k\rightarrow\infty} \lambda_{n-1}\Big((\bigcup_{x\in B(0,k)\cap F}\rb N_F(x))\cap\mathbb{S}^{n-1}\Big)\\
&=\lim_{k\rightarrow\infty} \lambda_{n-1}\Big((\bigcup_{x\in B(0,k)\cap F}\rb N_{\overline{B(0,k)}\cap F}(x))\cap\mathbb{S}^{n-1}\Big)\\
&\leq\lim_{k\rightarrow\infty} \lambda_{n-1}\Big((\bigcup_{x\in \overline{B(0,k)}\cap F}\rb N_{\overline{B(0,k)}\cap F}(x))\cap\mathbb{S}^{n-1}\Big)\\
&= 0, 
\end{align*}
where the final equality follows since $\overline{B(0,k)}\cap F$ is a compact convex set.
\end{proof}

We need the following simple proposition.  For future reference, we let $\pi\colon\R^{n+1}\to\R^n$ be the canonical projection onto the first $n$ coordinates.

\begin{proposition}\label{prop:cone}
Consider a convex function $f\colon\R^n\to\overline{\R}$ and a point $x\in\R^n$. Then we have the relation,
$$v\in\rb\partial f(x) \Leftrightarrow (v,-1)\in \rb N_{\mbox{{\scriptsize {\rm epi}}}\, f}(x,f(x)).$$
\end{proposition}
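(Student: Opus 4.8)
The plan is to reduce the claimed equivalence to two simpler ones, at the level of the sets themselves and at the level of their relative interiors, and then recombine. Write $N := N_{\epi f}(x,f(x))$ for brevity; this is a closed convex cone in $\R^{n+1}$, and by the definition of the convex subdifferential we have the slice identity $(v,-1)\in N \Leftrightarrow v\in\partial f(x)$. Since both $N$ and $\partial f(x)$ are closed, we have $\rb N = N\setminus\ri N$ and $\rb\partial f(x)=\partial f(x)\setminus\ri\partial f(x)$. Thus, once we establish the two equivalences
\[
(v,-1)\in N \Leftrightarrow v\in\partial f(x), \qquad (v,-1)\in\ri N \Leftrightarrow v\in\ri\partial f(x),
\]
the proposition follows at once: $v\in\rb\partial f(x)$ holds exactly when $v\in\partial f(x)$ but $v\notin\ri\partial f(x)$, which by the two equivalences is exactly when $(v,-1)\in N$ but $(v,-1)\notin\ri N$, i.e.\ when $(v,-1)\in\rb N$. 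The first equivalence is the definition; the work is entirely in the second. Note also that if $\partial f(x)=\emptyset$ then both sides of the proposition are vacuously false, since no point of the form $(v,-1)$ lies in $N\supseteq\rb N$, so we may assume $\partial f(x)\neq\emptyset$.

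To prove the relative-interior equivalence I would slice $N$ with the hyperplane $H:=\{(v,w)\in\R^{n+1}: w=-1\}$. By the slice identity, $N\cap H=\partial f(x)\times\{-1\}$, whose relative interior as a subset of $\R^{n+1}$ is $\ri\partial f(x)\times\{-1\}$. The key tool is the standard fact from convex analysis that, for a convex set $C$ and an affine set $H$ meeting its relative interior, one has $\ri(C\cap H)=(\ri C)\cap H$. Applying this with $C=N$ gives $(\ri N)\cap H=\ri\partial f(x)\times\{-1\}$, which is precisely the second equivalence. The one hypothesis that must be verified — and this is the crux of the argument — is the transversality condition $(\ri N)\cap H\neq\emptyset$, namely that some point of $\ri N$ has last coordinate equal to $-1$.

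To check transversality I would project onto the last coordinate via $\ell(v,w)=w$. Since $\partial f(x)\neq\emptyset$, there is a point $(v_0,-1)\in N$, and because $N$ is a cone it also contains $(2v_0,-2)$; hence $\ell(N)$ is a convex subset of $\R$ containing both $0$ and $-2$, so $\ell(N)\supseteq[-2,0]$. Using the standard fact that a linear image commutes with the relative interior, $\ell(\ri N)=\ri(\ell(N))$, and since $-1$ lies strictly between $-2$ and $0$ it belongs to $\ri(\ell(N))=\ell(\ri N)$. Hence some point of $\ri N$ has last coordinate $-1$, establishing transversality. Combining the two equivalences as in the first paragraph then yields $v\in\rb\partial f(x)\Leftrightarrow(v,-1)\in\rb N$, as claimed. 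The only delicate point throughout is this transversality check: without it the slice could in principle miss $\ri N$ — as indeed happens precisely when $\partial f(x)=\emptyset$ — and the relative-interior-of-a-slice identity would not apply.
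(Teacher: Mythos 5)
Your proposal is correct and takes essentially the same route as the paper's proof: both treat the case $\partial f(x)=\emptyset$ as trivial, both rest on the key identity $\ri(K\cap H)=\ri K\cap H$ for the affine slice $H=\{y\in\R^{n+1}:y_{n+1}=-1\}$ (the paper cites \cite[Proposition 2.42]{VA} for exactly this), and both verify the needed transversality $\ri K\cap H\neq\emptyset$ by exploiting that $K=N_{\epi f}(x,f(x))$ is a cone containing a point with last coordinate $-1$. Your transversality check (projecting onto the last coordinate and using that relative interiors commute with linear images) is only a cosmetic variant of the paper's argument (which notes $\ri K$ cannot lie in the upper halfspace, else taking closures would force $\partial f(x)=\emptyset$, and then rescales a point of $\ri K$); the structure and substance of the two proofs coincide.
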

\begin{proof}
 Let $K$ denote the normal cone, $N_{\mbox{{\scriptsize {\rm epi}}}\, f}(x,f(x))$. If $\partial f(x)=\emptyset$, then there is no $v\in\R^n$ such that $(v,-1)\in \rb K$, and hence the result holds trivially. Assume that $\partial f(x)$ is nonempty. Observe $$\mbox{\rm ri}\, K \not\subset \{y\in\R^{n+1}: y_{n+1}\geq 0\},$$ since otherwise taking closures gives $y_{n+1}\geq 0$ for all $y\in K$ and hence we have $\partial f(x)=\emptyset$, which is a contradiction. Thus there exists a point $y\in \mbox{\rm ri}\, K$ with $y_{n+1}<0$. Since $K$ is a cone, we can rescale to get $\hat{y}\in \ri K$ with $\hat{y}_{n+1}=-1$. Hence $$\ri K \cap \{y\in\R^{n+1}:y_{k+1}=-1\}\neq \emptyset.$$ Using \cite[Proposition 2.42]{VA}, we deduce that 
\begin{equation}\label{eq:ri}
\ri(K \cap \{y\in\R^{n+1}:y_{k+1}=-1\})= \ri K \cap \{y\in\R^{n+1}:y_{k+1}=-1\}.
\end{equation} 
Finally, we have $$\ri \partial f(x)= \pi\Big(\ri (K\cap \{y\in\R^{n+1}:y_{k+1}=-1\})\Big)=\{v:(v,-1)\in\ri K\},$$ where the last equality follows from (\ref{eq:ri}). Taking compliments, the result follows.
\end{proof}

\begin{theorem}\label{thm:main}
Let $f:\R^n\rightarrow\overline{\R}$ be a convex function. Then the set $$\bigcup_{x\in\R^n} \rb \partial f(x)$$ is Lebesgue null. 
\end{theorem}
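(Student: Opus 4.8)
The plan is to lift everything to the epigraph. Since $\epi f \subset \R^{n+1}$ is convex, I would apply Lemma~\ref{lem:normal} to the convex set $F = \epi f$, with the ambient dimension now being $n+1$. Writing $K_y := N_{\epi f}(y)$ for the normal cone at a point $y \in \epi f$, this immediately gives
\[
\lambda_n\Big( \big( \bigcup_{y \in \epi f} \rb K_y \big) \cap \mathbb{S}^n \Big) = 0,
\]
and I would name this sphere set $S$. The whole proof then becomes a matter of transporting this one measure-zero statement from the sphere $\mathbb{S}^n$ down to the affine hyperplane on which the lifted subgradients sit.

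The connection between subgradients and normal cones is exactly Proposition~\ref{prop:cone}. For any $x$ with $f(x)$ finite and any $v \in \rb \partial f(x)$, the proposition yields $(v,-1) \in \rb N_{\epi f}(x,f(x))$, while at points where $f$ is not finite the subdifferential is empty and contributes nothing. Hence the lifted set
\[
\tilde A := \big\{ (v,-1) : v \in \textstyle\bigcup_{x\in\R^n} \rb \partial f(x) \big\}
\]
is contained in $\bigcup_{y\in\epi f} \rb K_y$ and lies in the hyperplane $H := \{y \in \R^{n+1} : y_{n+1} = -1\}$.

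For the transfer step I would use that $\bigcup_{y} \rb K_y$ is a union of cones, so the radial projection $p(y) = |y|^{-1}y$ maps $\tilde A$ into $S$, with image having strictly negative last coordinate. The relevant inverse is the central projection $g$ from the open lower hemisphere $\{u \in \mathbb{S}^n : u_{n+1} < 0\}$ back to $H$, given by $g(u) = -u_{n+1}^{-1} u$, which satisfies $g \circ p = \mathrm{id}$ on $H$ and is locally Lipschitz because $u_{n+1}$ is locally bounded away from zero on the lower hemisphere. Thus $\tilde A \subset g\big(S \cap \{u_{n+1} < 0\}\big)$, and Corollary~\ref{cor:local_lip} forces $\lambda_n(\tilde A) = 0$. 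Finally, the canonical projection $\pi$ restricts to an isometry of $H$ onto $\R^n$ carrying $\tilde A$ onto $\bigcup_x \rb \partial f(x)$ and preserving $\lambda_n$; since $\lambda_n$ on $\R^n$ is a rescaling of Lebesgue measure, this concludes that $\bigcup_x \rb \partial f(x)$ is Lebesgue null.

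I expect the transfer from the sphere to the hyperplane to be the delicate point. The naive radial projection $p$ runs in the wrong direction and can collapse sets, so it cannot by itself push a measure-zero conclusion from $S$ onto $\tilde A$; the crux is to invert $p$ by the central projection $g$ and to check that $g$ is locally Lipschitz precisely on the lower hemisphere, which is exactly where the lifts $(v,-1)$ project. Everything else is either a direct appeal to the earlier results or the routine identification of $\lambda_n$ on an $n$-dimensional affine subspace with Lebesgue measure.
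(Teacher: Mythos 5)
Your proof is correct and takes essentially the same route as the paper: you apply Lemma~\ref{lem:normal} to $\epi f$, lift relative-boundary subgradients into the normal cones via Proposition~\ref{prop:cone}, and transfer the null set from $\mathbb{S}^n$ to the hyperplane $\{y_{n+1}=-1\}$, where your central projection $g(u)=-u_{n+1}^{-1}u$ is precisely the paper's map $\phi(c)=|c_{n+1}|^{-1}c$ restricted to the lower hemisphere, followed by the same projection $\pi$. The only cosmetic difference is that you establish the containment $\tilde A\subset g\big(S\cap\{u_{n+1}<0\}\big)$ by inverting the radial projection (using that relative boundaries of the normal cones are cones), whereas the paper records the equality $\pi\circ\phi(K)=\bigcup_{x}\rb\partial f(x)$ directly from the equivalence in Proposition~\ref{prop:cone}.
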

\begin{proof}
Let $$H_{-1}:=\{x\in\R^{n+1}:x_{n+1}=-1\},$$ $$H_{<}:=\{x\in\R^{n+1}:x_{n+1}<0\},$$ $$K:=\Big(\bigcup_{x\in \mbox{{\scriptsize {\rm dom}}}\, f} \rb N_{\mbox{{\scriptsize {\rm epi}}}\, f}(x,f(x))\Big)\cap\mathbb{S}^n\cap H_{<}.$$ Applying Lemma~\ref{lem:normal} to $\epi f$, we deduce  
$\lambda_n(K)=0$. Consider the mapping $$\phi: H_{<}\rightarrow H_{-1},~~c\mapsto |c_{n+1}|^{-1}c.$$ Observe that $\phi$ is locally Lipschitz, and therefore by Corollary~\ref{cor:local_lip}, we have $\lambda_n(\phi(K))=0$.
 From Proposition~\ref{prop:cone}, we have $$\pi\circ\phi(K)=\bigcup_{x\in \mbox{{\scriptsize {\rm dom}}}\, f} \rb \partial f(x).$$  
Since $\pi$ is Lipschitz as well, we deduce $\lambda_n(\bigcup_{x\in\R^n} \rb \partial f(x))=0$. Since Hausdorff measures are Borel-regular \cite[Section 2.10.2]{geo_mes}, the set $\bigcup_{x\in\R^n} \rb \partial f(x)$ is Lebesgue measurable and has Lebesgue measure zero. 
\end{proof}
\begin{definition}
{\rm Consider a convex function $f\colon\R^n\rightarrow\overline{\R}$. A minimizer $x\in\R^n$ of $f$ is said to be {\em nondegenerate} if it satisfies the property $0 \in \ri \partial f(x)$.}
\end{definition}

\begin{corollary}
Let $f\colon\R^n\to\overline{\R}$ be a proper convex function. Consider the collection of perturbed functions $f_v(x)=f(x)-\langle v,x\rangle$, indexed by vectors $v\in\R^n$. Then for a full measure set of vectors $v\in\R^n$, the function $f_v$ has at most one minimizer, which furthermore is nondegenerate.
\end{corollary}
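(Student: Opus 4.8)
The plan is to combine Theorem~\ref{thm:main} with the classical uniqueness result sketched in the introduction, translating both into statements about the perturbation vector $v$. The key observation is that $x$ minimizes $f_v$ precisely when $0\in\partial f_v(x)$, and since $f_v=f-\langle v,\cdot\rangle$ differs from $f$ by a smooth (indeed linear) term, Theorem~\ref{ex} gives $\partial f_v(x)=\partial f(x)-v$. Hence $x$ minimizes $f_v$ iff $v\in\partial f(x)$, and moreover $x$ is a \emph{nondegenerate} minimizer iff $0\in\ri\partial f_v(x)=\ri\bigl(\partial f(x)-v\bigr)$, that is, iff $v\in\ri\partial f(x)$. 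The task is therefore to show that for almost every $v$, there is at most one $x$ with $v\in\partial f(x)$, and that this $x$ satisfies $v\in\ri\partial f(x)$.

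First I would dispose of uniqueness, exactly as in the introduction: since $\partial f(x)\ni v\Leftrightarrow x\in(\partial f)^{-1}(v)=\partial f^*(v)$, and $f^*$ is differentiable almost everywhere on the interior of its domain by Rademacher's theorem, for almost all $v$ the set of minimizers $\partial f^*(v)$ is a singleton or empty. Let $A$ denote the full-measure set of such $v$.

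Next I would handle nondegeneracy using Theorem~\ref{thm:main}. Define
\[
B:=\bigcup_{x\in\R^n}\rb\,\partial f(x),
\]
which by Theorem~\ref{thm:main} is Lebesgue null. I claim that for $v\in A\setminus B$, whenever $f_v$ has a minimizer $x$, that minimizer is nondegenerate. Indeed, if $x$ minimizes $f_v$ then $v\in\partial f(x)$; since $v\notin B$ we have $v\notin\rb\,\partial f(x)$, so $v$ lies in $\partial f(x)\setminus\rb\,\partial f(x)=\ri\,\partial f(x)$ (using that $\partial f(x)$ is closed and convex, so it equals the disjoint union of its relative interior and relative boundary). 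By the computation above, $v\in\ri\,\partial f(x)$ is exactly the statement $0\in\ri\,\partial f_v(x)$, i.e.\ $x$ is nondegenerate.

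Finally, the set $A\setminus B$ has full measure, being the complement of a null set intersected with a full-measure set, so the corollary holds on $A\setminus B$. The main obstacle is conceptual rather than technical: one must correctly translate nondegeneracy of the minimizer of the \emph{perturbed} function $f_v$ into membership of the perturbation vector $v$ in the relative interior of a subdifferential of the \emph{original} function $f$, so that Theorem~\ref{thm:main} applies to the relative boundaries and the null-set conclusion transfers to the $v$-space. Once this identification $0\in\ri\,\partial f_v(x)\Leftrightarrow v\in\ri\,\partial f(x)$ is established via Theorem~\ref{ex}, the remaining steps are routine measure-theoretic bookkeeping.
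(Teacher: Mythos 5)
Your proposal is correct and follows essentially the same route as the paper: uniqueness is delegated to the classical Rademacher argument from the introduction, and nondegeneracy follows from the equivalence $0\in\rb\,\partial f_v(x)\Leftrightarrow v\in\rb\,\partial f(x)$ (via $\partial f_v(x)=\partial f(x)-v$) together with the null set of Theorem~\ref{thm:main}. The only difference is that you spell out the translation step and the decomposition $\partial f(x)=\ri\,\partial f(x)\cup\rb\,\partial f(x)$ explicitly, which the paper leaves implicit.
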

\begin{proof} 
The uniqueness part of the claim is classical, as discussed in the introduction. Thus it is sufficient to show that for a full measure set of vectors $v\in\R^n$, every critical point of $f_v$ is nondegenerate. Indeed, we have  $0\in\rb\partial f_v(x) \Leftrightarrow v\in\rb\partial f(x)$. By Theorem~\ref{thm:main}, the set of vectors $v$ for which $v\in\rb\partial f(x)$ for some $x\in\R^n$ has Lebesgue measure zero, and so the result follows.
\end{proof}

\subsection{Extension to lower-$\bf{C}^2$ functions}
Having proved Theorem~\ref{thm:main}, we can now easily extend this theorem to a nonconvex situation.  In particular, shortly we will show that an analogous statement holds for all lower-$\bf{C}^2$ functions.
\begin{theorem}\label{thm:ext}
Consider a proper function $f\colon\R^n\to\overline{\R}$ with the property that for any point $\bar{x}$ in its domain, there is a neighborhood $V$ around $\bar{x}$ such that on $V$, the function $f$ admits the representation $f=g-\frac{1}{2}\rho|\cdot|^2$, where $g$ is a convex function and $\rho$ is a positive real number. Then the set  $$\bigcup_{x\in \R^n} \rb \partial f(x)$$ is Lebesgue null.  
\end{theorem}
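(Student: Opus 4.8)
The plan is to reduce Theorem~\ref{thm:ext} to the convex case, Theorem~\ref{thm:main}, by localization and the smooth sum rule, and then to isolate the one genuinely new difficulty. Since $\R^n$ is second countable, the neighborhoods furnished by the hypothesis admit a countable subcover $\{V_j\}$ of $\dom f$, where on the open ball $V_j$ we have $f = g_j - \frac12\rho_j|\cdot|^2$ with $g_j$ convex and $\rho_j>0$; extending $g_j$ by $+\infty$ outside $V_j$ keeps it a proper convex function on $\R^n$ without altering its subdifferential at interior points of $V_j$. Because $\partial f$ is empty off $\dom f$ and Hausdorff measure is countably subadditive, it suffices to treat a single ball $V$, writing $f = g - \frac12\rho|\cdot|^2$ there.

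On $V$ the sum rule (Theorem~\ref{ex}, with smooth summand $-\frac12\rho|\cdot|^2$) gives $\partial f(x)=\partial g(x)-\rho x$, and since translation carries the relative boundary of the convex set $\partial g(x)$ to the relative boundary of its translate, also $\rb\partial f(x)=\rb\partial g(x)-\rho x$. Hence
\[
\bigcup_{x\in V}\rb\partial f(x)=q(\Gamma),\qquad q(x,w):=w-\rho x,
\]
where $\Gamma:=\{(x,w)\in\R^n\times\R^n: x\in V,\ w\in\rb\partial g(x)\}$ is the relative-boundary part of the graph of $\partial g$, and $q$ is globally Lipschitz. By Proposition~\ref{prop:lip} it is therefore enough to prove $\lambda_n(\Gamma)=0$, the $n$-dimensional Hausdorff measure being taken in $\R^{2n}$. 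I would record the conceptual payoff: the linear bijection $L(x,v)=(x,v+\rho x)$ is bi-Lipschitz and identifies the corresponding graph for $f$ with $\Gamma$, so the single claim $\lambda_n(\Gamma)=0$ implies both Theorem~\ref{thm:main} (project onto the $w$-axis) and the present theorem (apply $q$), absorbing the parameter $\rho$ for free.

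The crux is thus $\lambda_n(\Gamma)=0$, and here the argument must go beyond Theorem~\ref{thm:main} as stated. That theorem, through Lemma~\ref{lem:normal} and Larman's theorem, controls only the \emph{projection} $R_g=\bigcup_x\rb\partial g(x)$ (the union of relative boundaries), not the base-point-indexed graph $\Gamma$; and the naive inclusion $\Gamma\subseteq V\times R_g$ is useless, since it yields only $q(\Gamma)\subseteq R_g-\rho V$, a thickening of a null set that need not be null. I would instead re-run the geometry of Lemma~\ref{lem:normal} while keeping each relative-boundary normal attached to the exposed face producing it; equivalently, using Proposition~\ref{prop:cone} and the identification $\sigma\in\partial g(x)\Leftrightarrow(\sigma,-1)\in N_{\epi g}(x,g(x))$, I would prove a \emph{graphical} refinement of Larman's theorem for $\epi g$, asserting that the set of (point, relative-boundary normal) pairs has vanishing $n$-dimensional Hausdorff measure. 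This is the statement that the degenerate locus is $(n-1)$-dimensional: over the at-most-$(n-k)$-dimensional set where $\dim\partial g(x)\ge k$, the fibers $\rb\partial g(x)$ are $(k-1)$-dimensional. Since Minty's theorem (Theorem~\ref{thm:minty}) exhibits $\mathrm{gph}\,\partial g$ as a Lipschitz image of $\R^n$ through the resolvent, hence $n$-rectifiable, this dimension bound forces $\lambda_n(\Gamma)=0$. Establishing the graphical refinement is the main obstacle; the $+\infty$ extension of $g$, the measurability of $\Gamma$ and its image, and the reassembly of the countable union are routine, exactly as in the proof of Theorem~\ref{thm:main}.
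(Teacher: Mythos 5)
Your localization and sum-rule reductions coincide with the paper's, and your diagnosis of the crux is exactly right: Theorem~\ref{thm:main} controls only the projected union $\bigcup_x\rb\partial g(x)$, and the inclusion $q(\Gamma)\subseteq R_g-\rho V$ thickens a null set into something useless. But your proof then stops precisely where it must start. The ``graphical refinement of Larman's theorem'' asserting $\lambda_n(\Gamma)=0$ is left unproved --- you say yourself it is ``the main obstacle'' --- so the argument is incomplete, and the route you sketch for it is not sound as written. The stratification claim that $\{x:\dim\partial g(x)\geq k\}$ is at most $(n-k)$-dimensional is a nontrivial theorem about singular points of convex functions appearing nowhere in the paper, and even granting it, fiberwise dimension bounds ($\rb\partial g(x)$ being at most $(k-1)$-dimensional over each stratum) do not by themselves force $\lambda_n(\Gamma)=0$: one would still need rectifiability of the strata and a coarea-type integration of fiber dimensions, which you gesture at via rectifiability of $\mathrm{gph}\,\partial g$ but never carry out. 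So re-running Lemma~\ref{lem:normal} and Theorem~\ref{thm:larm} ``graphically'' is both unexecuted and harder than necessary.

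The gap closes with an observation you nearly make. Minty's theorem does not merely render $\mathrm{gph}\,\partial g$ rectifiable; it provides a Lipschitz parametrization under which $\Gamma$ pulls back into a set \emph{already} covered by Theorem~\ref{thm:main}, applied not to $g$ but to the convex function $g+\tfrac12|\cdot|^2$. Indeed, by Theorem~\ref{ex} one has $\partial\bigl(g+\tfrac12|\cdot|^2\bigr)(x)=\partial g(x)+x$, so $\rb\partial g(x)+x=\rb\,\partial\bigl(g+\tfrac12|\cdot|^2\bigr)(x)$; and by Theorem~\ref{thm:minty}, any $c\in\partial g(x)+x$ determines its base point $x=(I+\partial g)^{-1}(c)$ single-valuedly and $1$-Lipschitzly. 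Hence $c\mapsto\bigl((I+\partial g)^{-1}(c),\,c-(I+\partial g)^{-1}(c)\bigr)$ is a Lipschitz surjection from the set $\bigcup_x\rb\,\partial\bigl(g+\tfrac12|\cdot|^2\bigr)(x)$ --- Lebesgue null by Theorem~\ref{thm:main} --- onto your $\Gamma$, and Corollary~\ref{cor:local_lip} gives $\lambda_n(\Gamma)=0$. This is essentially the paper's proof, which skips $\Gamma$ and composes with your map $q$ directly: it defines $H(c)=c-(\rho+1)(\partial g+I)^{-1}(c)$, a well-defined Lipschitz surjection from that same null set onto $\bigcup_{x\in V_i}\rb\partial f(x)$. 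No refinement of Larman's theorem is needed; the one idea your proposal is missing is that adding $\tfrac12|\cdot|^2$ makes the base point a Lipschitz function of the subgradient, converting the projected statement of Theorem~\ref{thm:main} into the graphical one for free.
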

\begin{remark} In Theorem~\ref{thm:ext}, unlike in the definition of lower-$\bf{C}^2$ functions, the domain of $f$ is not required to be an open set and the convex function $g$ in the local representation of $f$ is not required to be finite.
\end{remark}
\begin{proof}
For each point $x\in\dom f$, consider the neighborhood guaranteed to exist by our assumption on $f$. This collection of neighborhoods is an open cover of the domain of $f$, and hence has a countable subcover, say $\{V_i\}$. Consider an arbitrary set $V_i$ from this cover. On $V_i$, we have $f=g-\frac{1}{2}\rho|\cdot|^2$, and hence 
\begin{align}\label{eqn:equiv} 
\bigcup_{x\in V_i} \rb \partial f(x)&=\bigcup_{x\in V_i\cap\mbox{{\scriptsize {\rm dom}}}\, f} \rb \partial g(x)-\rho x\\
&=\bigcup_{x\in V_i\cap\mbox{{\scriptsize {\rm dom}}}\, f} \rb (\partial g(x)+x)-(\rho+1)x.\nonumber  
\end{align}
Consider the map $$H\colon \bigcup_{x\in V_i\cap\mbox{{\scriptsize {\rm dom}}}\, f} \rb (\partial g(x)+x) \to \bigcup_{x\in V_i} \rb \partial f(x),$$
$$c\mapsto c-(\rho+1)(\partial g + I)^{-1}(c).$$
In light of (\ref{eqn:equiv}) and Theorem~\ref{thm:minty}, the mapping $H$ is well-defined, surjective, and Lipschitz continuous. Observe 
\begin{equation*}\label{eq:lip}
\lambda_n\Big(\bigcup_{x\in V_i\cap\mbox{{\scriptsize {\rm dom}}}\, f} \rb (\partial g(x)+x)\Big)=\lambda_n\Big(\bigcup_{x\in V_i\cap\mbox{{\scriptsize {\rm dom}}}\, f} \rb \partial (g(\cdot)+\frac{1}{2}|\cdot|^2)(x)\Big)=0,
\end{equation*}
where the last equality follows from convexity of $g+\frac{1}{2}|\cdot|^2$ and Theorem~\ref{thm:main}. From the equation above and Corollary~\ref{cor:local_lip}, we have $\lambda_n\Big(\bigcup_{x\in V_i} \rb \partial f(x)\Big)=0$.
Since Hausdorff measures are Borel-regular, the set $\bigcup_{x\in V_i} \rb \partial f(x)$ is Lebesgue measurable and has Lebesgue measure zero. Finally, since $\{V_i\}$ is a countable cover of $\dom f$, it easily follows from a limiting argument that $\bigcup_{x\in \R^n} \rb \partial f(x)$ is a Lebesgue null set, as was claimed.
\end{proof}

\begin{corollary}\label{cor:lower}
Let $f\colon O\rightarrow\overline{\R}$ be a lower-$\bf{C}^2$ function on an open set $O\subset\R^n$. Then the set $$\bigcup_{x\in\R^n} \rb \partial f(x).$$ is Lebesgue null. 
\end{corollary}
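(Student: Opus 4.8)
The plan is to show that every lower-$\bf{C}^2$ function on an open set $O$ falls under the hypotheses of Theorem~\ref{thm:ext}, after which the conclusion is immediate. The only real work is to translate the defining local representation of a lower-$\bf{C}^2$ function (namely, $f+\rho|\cdot|^2$ locally convex) into the precise form demanded by Theorem~\ref{thm:ext} (namely, $f=g-\frac12\rho|\cdot|^2$ with $g$ convex and $\rho>0$), together with a small bookkeeping point arising from the fact that the corollary's $f$ is defined only on $O$ rather than on all of $\R^n$.

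I would begin by extending $f$ to a function on $\R^n$, setting it equal to $+\infty$ off $O$. Since $f$ is real-valued on $O$, the extended function has domain exactly $O$, and because $O$ is open this extension does not alter $\partial f(x)$ at any point $x\in O$: the regular and limiting subdifferentials are defined through normal cones to the epigraph and are therefore local objects, and near each point of $O$ the extended function agrees with the original one. Thus establishing the claim for the extended function establishes it for $f$.

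Next, fix an arbitrary point $\bar x\in O=\dom f$. By Definition~\ref{thm:char} there is a neighborhood of $\bar x$, which I may shrink to an open ball $V\subset O$, and a scalar $\rho$ such that $f+\rho|\cdot|^2$ is finite and convex on $V$. I may assume $\rho>0$: if not, I replace $\rho$ by any larger positive scalar $\rho'$, using that $f+\rho'|\cdot|^2=(f+\rho|\cdot|^2)+(\rho'-\rho)|\cdot|^2$ is again convex on $V$, being a sum of convex functions. Setting $g:=f+\rho|\cdot|^2$, a finite convex function on the ball $V$, I obtain on $V$ the representation $f=g-\rho|\cdot|^2=g-\frac12(2\rho)|\cdot|^2$ with $2\rho>0$. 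This is exactly the local form required by the hypothesis of Theorem~\ref{thm:ext}, and by the remark accompanying that theorem the convex function $g$ need be neither globally defined nor finite, so the local $g$ just constructed suffices. As $\bar x\in O$ was arbitrary, the extended $f$ satisfies the hypotheses of Theorem~\ref{thm:ext}, and invoking that theorem shows that $\bigcup_{x\in\R^n}\rb\partial f(x)$ is Lebesgue null.

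I do not anticipate a genuine obstacle, since the substantive content is already carried by Theorem~\ref{thm:ext}; the proof is essentially a verification of hypotheses. The one step that warrants care is the matching of coefficients and, in particular, the sign of the quadratic term: the lower-$\bf{C}^2$ definition only supplies \emph{some} scalar making $f+\rho|\cdot|^2$ convex, which need not be positive, whereas Theorem~\ref{thm:ext} insists on a strictly positive coefficient in $f=g-\frac12\rho|\cdot|^2$. The observation that enlarging $\rho$ preserves convexity resolves this cleanly.
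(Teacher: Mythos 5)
Your proof is correct and follows exactly the paper's route: the paper's own proof of this corollary is the one-line observation that Definition~\ref{thm:char} places $f$ under the hypotheses of Theorem~\ref{thm:ext}. You merely make explicit the routine details the paper leaves implicit (the extension of $f$ by $+\infty$ off the open set $O$, the locality of the subdifferential, and the enlargement of $\rho$ to a positive scalar), all of which are handled correctly.
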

\begin{proof}
From Definition~\ref{thm:char}, $f$ satisfies the conditions of Theorem~\ref{thm:ext}, and hence the result follows. 
\end{proof}

\begin{definition}
{\rm Let $f\colon O\to\R$ be a lower-$\bf{C}^2$ function on an open set $O\subset\R^n$. We say that a point $x\in\R^n$ is {\em critical} for the function $f$ if $0\in\partial f(x)$, and we call such a critical point $x$ {\em nondegenerate} if the stronger property $0 \in \mbox{ri}\, \partial f(x)$ holds.}
\end{definition}

\begin{corollary}
Let $f\colon O\to\R$ be a lower-$\bf{C}^2$ function on an open set $O\subset\R^n$. Consider the collection of perturbed functions $f_v(x)=f(x)-\langle v,x\rangle$, indexed by vectors $v\in\R^n$. Then for a full measure set of vectors $v\in\R^n$, every critical point of the function $f_v$ is nondegenerate.
\end{corollary}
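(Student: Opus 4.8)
The plan is to reduce this corollary to Corollary~\ref{cor:lower} in exactly the manner used to derive the analogous statement for proper convex functions from Theorem~\ref{thm:main}; the only genuinely new ingredient is pushing the linear perturbation $-\langle v,\cdot\rangle$ through the subdifferential. First I would record how the perturbation acts on subdifferentials. Since $f_v = f - \langle v,\cdot\rangle$ and the map $x\mapsto -\langle v,x\rangle$ is $\bf{C}^1$ smooth with gradient $-v$, Theorem~\ref{ex} gives $\partial f_v(x) = \partial f(x) - v$ for every point $x$.

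Next I would characterize degeneracy of a critical point in terms of the relative boundary. Because $f$ is lower-$\bf{C}^2$, locally $f = g - \frac{1}{2}\rho|\cdot|^2$ with $g$ convex, so $\partial f(x)$ (and hence $\partial f_v(x)$) is a closed convex set. Thus whenever $0\in\partial f_v(x)$ we have the dichotomy $0\in\ri\partial f_v(x)$ or $0\in\rb\partial f_v(x)$, and a critical point $x$ of $f_v$ is degenerate precisely when $0\in\rb\partial f_v(x)$. Since the relative boundary is translation-invariant, $\rb\partial f_v(x) = \rb\partial f(x) - v$, so $0\in\rb\partial f_v(x)$ if and only if $v\in\rb\partial f(x)$. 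Combining these observations, $f_v$ possesses a degenerate critical point exactly when $v$ lies in the set $\bigcup_{x\in\R^n}\rb\partial f(x)$. By Corollary~\ref{cor:lower} this set is Lebesgue null, so for every $v$ outside it, and hence for a full measure set of vectors $v$, the function $f_v$ has no degenerate critical point; equivalently, every critical point of $f_v$ is nondegenerate.

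In terms of difficulty, there is essentially no obstacle of substance here, as Corollary~\ref{cor:lower} already carries all of the measure-theoretic weight. The one point deserving care is the equivalence $0\in\rb\partial f_v(x)\Leftrightarrow v\in\rb\partial f(x)$: it relies on the exact sum rule of Theorem~\ref{ex}, which applies because the perturbation is $\bf{C}^1$ rather than merely Lipschitz, and on $\partial f(x)$ being both convex and closed so that the relative-interior versus relative-boundary dichotomy is exhaustive. Once that equivalence is in hand, the conclusion is immediate.
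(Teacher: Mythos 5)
Your proposal is correct and follows essentially the same route as the paper: the paper's own proof consists precisely of the equivalence $0\in\rb\partial f_v(x)\Leftrightarrow v\in\rb\partial f(x)$ followed by an appeal to Corollary~\ref{cor:lower}. The details you supply---the sum rule of Theorem~\ref{ex} giving $\partial f_v(x)=\partial f(x)-v$, and the closed convexity of $\partial f(x)$ (via the local representation $f=g-\tfrac{1}{2}\rho|\cdot|^2$) making the $\ri$/$\rb$ dichotomy exhaustive---are exactly what the paper leaves implicit.
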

\begin{proof} 
We have  $0\in\rb\partial f_v(x) \Leftrightarrow v\in\rb\partial f(x)$. By Corollary~\ref{cor:lower}, the set of vectors $v$ for which $v\in\rb\partial f(x)$ for some $x\in\R^n$ has Lebesgue measure zero, and so the result follows.
\end{proof}

\section{A conjecture}
We can formulate Theorem~\ref{thm:main} in terms of monotone set-valued mappings. See \cite[Chapter 12]{VA} for the definitions. If we restrict our attention in the theorem to closed proper convex functions $f$, then Theorem~\ref{thm:main} is equivalent to the statement that for a maximal cyclically-monotone mapping $F\colon\R^n\rightrightarrows\R^n$, the image of the set-valued map $x\mapsto \rb F(x)$ has Lebesgue measure zero (see \cite[Theorem 12.25]{VA}). We make the following related conjecture. 
\begin{conjecture}\label{conj:max}
Let $F\colon\R^n\rightrightarrows\R^n$ be a maximal monotone mapping. Then the image of the map $x\mapsto \rb F(x)$ has Lebesgue measure zero, that is, the set  $$\bigcup_{x\in\R^n} \rb F(x)$$ is Lebesgue null.
\end{conjecture}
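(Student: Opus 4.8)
The plan is to reduce the conjecture, via the Minty parametrization, to a single statement about the convex partition of $\R^n$ induced by the resolvent of $F$, and then to seek a Larman-type estimate for that partition. By the full Minty theorem (\cite{minty}), the resolvent $J:=(I+F)^{-1}$ is single-valued, everywhere-defined, and nonexpansive (in fact firmly nonexpansive), while the Yosida map $G:=I-J$ is likewise everywhere-defined and $1$-Lipschitz. Since $F(x)$ is closed and convex for each $x$, the fibers of $J$ are exactly the closed convex sets $J^{-1}(x)=x+F(x)$, and these partition $\R^n$. A direct computation shows that $J(x+v)=x$ and $G(x+v)=v$ whenever $v\in F(x)$, whence
$$\bigcup_{x\in\R^n}\rb F(x)=G(B),\qquad B:=\big\{z\in\R^n: z\in\rb J^{-1}(J(z))\big\}.$$
Because $G$ is Lipschitz, Corollary~\ref{cor:local_lip} reduces the conjecture to the single assertion $\lambda_n(B)=0$; equivalently, almost every point of $\R^n$ lies in the relative interior of its own fiber.

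The geometric heart of the argument is therefore to control, for the convex partition $\{x+F(x)\}_{x}$, the set of points lying on the relative boundary of their cell. First I would dispose of the full-dimensional cells: their relative interiors are pairwise disjoint nonempty open sets, so there are only countably many such cells, and each contributes an $(n-1)$-dimensional, hence Lebesgue-null, relative boundary. The difficulty is concentrated in the cells of dimension strictly less than $n$. For these one wants an analogue of Larman's theorem (Theorem~\ref{thm:larm}): in the convex case the decisive structural input was that the graph of $F=\partial f$ arose as the normal-cone data of the single convex body $\epi f$, which allowed a direct appeal to Larman through Lemma~\ref{lem:normal}. Here the natural replacement would be to stratify the lower-dimensional fibers by dimension and affine hull and to show, dimension by dimension, that the traces of the relative boundaries form a Lipschitz image of a lower-dimensional set, so that Proposition~\ref{prop:lip} forces them to be $\lambda_n$-null.

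The main obstacle is precisely the passage from cyclic monotonicity to general monotonicity. A maximal monotone $F$ need not be a subdifferential, so there is no potential $f$ and no single convex body whose exposed-face structure encodes $F$; the Larman mechanism that powered Theorem~\ref{thm:main} is thus unavailable, and a general partition of $\R^n$ into convex sets carries no a priori guarantee that the relative boundaries of its cells are null. The monotone structure must instead be exploited intrinsically. One promising route is to use that $J$, being Lipschitz, is differentiable almost everywhere, with $DJ(z)$ firmly nonexpansive, and to argue that for almost every $z$ the fiber through $z$ is, to first order, the affine set $z+\ker DJ(z)$, with $z$ relatively interior to it; an alternative is to try to realize $\mbox{\rm gph}\, F$ through a convex representative (Fitzpatrick) function on $\R^n\times\R^n$ and transfer Larman to that higher-dimensional convex object. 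Making either rigorous—relating almost-everywhere differentiability of $J$ to genuine relative-interiority within the possibly nonsmooth fibers, and ruling out a positive-measure set of boundary points among the lower-dimensional cells—is the crux, and is where I expect any proof to require substantially new input beyond the convex machinery developed above. This is the step that keeps the statement a conjecture rather than a theorem.
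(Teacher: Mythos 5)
The statement you were asked to prove is stated in the paper as exactly that---a conjecture. The paper contains no proof of it, so there is no argument of the authors to compare yours against, and your proposal, as you yourself say plainly, is not a proof. What you do establish is correct and worthwhile: by Minty's theorem the resolvent $J=(I+F)^{-1}$ is everywhere defined, single-valued and firmly nonexpansive, the Yosida map $G=I-J$ is $1$-Lipschitz, the fibers $J^{-1}(x)=x+F(x)$ are closed convex sets partitioning $\R^n$, and since $J(x+v)=x$ and $G(x+v)=v$ whenever $v\in F(x)$, one indeed gets $\bigcup_{x}\rb F(x)=G(B)$, where $B$ is the set of points lying on the relative boundary of their own fiber; Corollary~\ref{cor:local_lip} then validly reduces the conjecture to $\lambda_n(B)=0$. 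This reduction is in the same spirit as the paper's own use of Theorem~\ref{thm:minty} in the proof of Theorem~\ref{thm:ext}, and your disposal of the full-dimensional fibers (pairwise disjoint nonempty interiors, hence countably many, each with Lebesgue-null boundary) is fine.

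The gap is the one you correctly diagnose, and neither of your proposed substitutes closes it. For $F=\partial f$ the paper could feed the single convex body $\epi f$ into Larman's theorem (Theorem~\ref{thm:larm}) via Theorem~\ref{thm:lewis} and Lemma~\ref{lem:normal}; a general maximal monotone $F$ admits no potential and no single convex body whose exposed-face structure encodes its graph, and an arbitrary convex partition of $\R^n$ gives no a priori control on $B$. Your first route---almost-everywhere differentiability of $J$---does not by itself place $z$ in the relative interior of its fiber: if the fiber through $z$ contains a segment $[z,z+u]$ with $z$ an endpoint, differentiability of $J$ at $z$ forces only $DJ(z)u=0$ and says nothing about whether $z-tu$ lies in the same fiber, so endpoints of fibers are not excluded on a null set by this observation alone. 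Your second route---transferring Larman to the Fitzpatrick function---lives in $\R^{2n}$, where Larman yields $\lambda_{2n-1}$-nullity of a set whose image in $\R^n$ you would need to be $\lambda_n$-null; a Lipschitz map out of $\R^{2n}$ preserves $\lambda_{2n-1}$-nullity (Proposition~\ref{prop:lip}) but that is far weaker than $\lambda_n$-nullity, so the dimension count does not close. In short: your proposal is an honest and correct reduction plus an accurate identification of the obstruction, but the decisive Larman-type estimate for the resolvent partition is missing---which is precisely why the statement remains a conjecture in the paper.
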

A proof of Conjecture~\ref{conj:max}, along with the techniques presented in this paper, might extend the result of Corollary~\ref{cor:lower} to the class of ``prox-regular'' functions \cite{prox}.

\bibliographystyle{amsplain}
\bibliography{Larman}

\end{document}